\def\p{\partial}
\newtheorem{Theorem}{Theorem}[section]
\newtheorem{Definition}{Definition}[section]
\newtheorem{Lemma}[Theorem]{Lemma}
\newtheorem{Corollary}[Theorem]{Corollary}
\newtheorem{Remark}{Remark}[section]
\numberwithin{equation}{section}
\newcommand{\oD}{ \!{\buildrel \circ 
\over D}^{_{_{_{\mbox{{\small $_{q+1}$}}}}}}}
\newcommand{\m}{\mathfrak{m}}
\newcommand{\gtor}{\bar{g}_{tor}}
\def\N{{\mathcal N}}
\def\Riem{{\mathcal R}{\mathrm i}{\mathrm e}{\mathrm m}}
\def\GL{{\mathcal G}{\mathcal L}}
\def\m{\mathfrak m}
\def\min{\mathrm{m}\mathrm{i}\mathrm{n}}
\begin{document}
\author{Mark Walsh}
\title{Cobordism Invariance of the homotopy type of the space of positive scalar curvature metrics}
\begin{abstract}
We show that the homotopy type of the space of metrics of positive scalar curvature on a smooth manifold remains unchanged, after application of surgery in codimension at least three to the underlying manifold. This result is originally due to V. Chernysh, but remains unpublished. 
\end{abstract}

\maketitle
\section{Introduction}\label{intro}
This paper concerns the space of metrics of positive scalar curvature ({psc-metrics}) on a smooth manifold $X$. Denoted $\Riem^{+}(X)$, this space is an open subspace of the space of all Riemannian metrics on $X$, $\Riem(X)$, equipped with its standard smooth topology. In general, little is known about the topology of $\Riem^{+}(X)$, although some results have been obtained at the level of $0$ and $1$-connectivity; see \cite{Walsh2} for a survey. This is in contrast to the problem of whether or not $X$ admits a psc-metric, of which a great deal is known; see \cite{RS}.

We are interested in the homotopy type of $\Riem^{+}(X)$ and how it is affected by surgery on the underlying manifold. Our main result, Theorem \ref{Chernyshthm}, is as follows. 
\vspace{0.2cm}

\noindent {\bf Main Theorem.}
{\em Let $X$ be a smooth compact manifold of dimension $n$. Suppose $Y$ is obtained from $X$ by surgery on a sphere $i:S^{p}\hookrightarrow X$ with $p+q+1=n$ and  $p,q\geq 2$. Then the spaces $\Riem^{+}(X)$ and $\Riem^{+}(Y)$ are homotopy equivalent.}
\vspace{0.2cm}

\noindent A consequence of this, Corollary \ref{spinChernysh}, is that in the case of simply connected spin manifolds of dimension at least five, the homotopy type of the space of psc-metrics is a spin-cobordism invariant. The main result of this paper is originally due to V. Chernysh in \cite{Che}, although remains unpublished. Our proof is much shorter and makes use of work done in \cite{Walsh1}.

The main idea behind the proof of Theorem \ref{Chernyshthm}, is to exhibit a homotopy equivalence between the space $\Riem^{+}(X)$ and a certain subspace ${\Riem}_{std}^{+}(X)$. This subspace consists of metrics which are ``standard" near an embedded surgery sphere. If $Y$ is obtained from $X$ by surgery on this sphere, then $X$ is obtainable from $Y$ by a complementary surgery. In turn, $\Riem^{+}(Y)$ is homotopy equivalent to a subspace ${\Riem}_{std}^{+}(Y)$, of metrics which are standard near this complementary surgery sphere. The space $\Riem_{std}^{+}(Y)$ is demonstrably homotopy equivalent to ${\Riem}_{std}^{+}(X)$. To show that $\Riem^{+}(X)$ and ${\Riem}_{std}^{+}(X)$ are homotopy equivalent, we use an intermediary space of ``almost standard" metrics, ${\Riem}_{Astd}^{+}(X)$, where ${\Riem}_{std}^{+}(X)\subset {\Riem}_{Astd}^{+}(X)\subset \Riem^{+}(X)$. In Lemma \ref{Chetech}, we show that there is a deformation retract from ${\Riem}_{Astd}^{+}(X)$ to ${\Riem}_{std}^{+}(X)$. It remains to show homotopy equivalence between ${\Riem}_{Astd}^{+}(X)$ and ${\Riem}^{+}(X)$. We now use the fact, due to Palais in \cite{Palais}, that both of these spaces are dominated by CW-complexes. Thus, by a theorem of Whitehead, it is enough to show that the relative homotopy groups $\pi_{k}(\Riem^{+}(X),{\Riem}_{Astd}^{+}(X))$ are trivial for all $k$. This is the most delicate step and is achieved by a family version of the surgery technique of Gromov and Lawson; see \cite{Walsh1} for a detailed account. A crucial requirement of this technique is that the embedded surgery spheres are in codimension at least three, hence the need for this hypothesis.

My thanks to Boris Botvinnik at the University of Oregon, Christine Escher at Oregon State University and David Wraith at NUI Maynooth, Ireland, for their helpful comments. 

\section{Background}
Given its importance in our work, it is worth recalling what we mean by surgery.
Let $X$ denote a smooth manifold of dimension $n$ and let $i:S^{p}\hookrightarrow X$ be an embedding with trivial normal bundle. Thus, we can extend $i$ to an embedding $\bar{i}:S^{p}\times D^{q+1}\hookrightarrow X$, where $p+q+1=n$. By removing an open neighbourhood of $S^{p}$, we obtain a manifold $X\setminus\bar{i}(S^{p}\times {\oD})$ with boundary $\bar{i}(S^{p}\times S^{q})$. Here ${\oD}$ denotes the interior of the disk $D^{q+1}$. As the handle $D^{p+1}\times S^{q}$ has diffeomorphic boundary, we can use the map $\bar{i}|_{S^{p}\times S^{q}}$, to glue the manifolds $X\setminus \bar{i}(S^{p}\times {\oD})$ and $D^{p+1}\times S^{q}$ together by identifying their boundaries and obtain the manifold 
\begin{equation*}
Y=(X\setminus\bar{i}(S^{p}\times {\oD}))\cup_{\bar{i}}(D^{p+1}\times S^{q}).
\end{equation*}
The manifold $Y$ is said to be obtained from $X$ by {\em surgery} on the embedding $\bar{i}$ (or by a $p$-surgery or codimension $q+1$-surgery). In particular, this surgery may be ``reversed" by performing a {\em complementary surgery} on the above embedding $D^{p+1}\times S^{q}\hookrightarrow Y$, to restore the original manifold $X$.

The principal technique in the construction of new psc-metrics comes from the Surgery Theorem of Gromov and Lawson in \cite{GL} (proved independently by Schoen and Yau \cite{SY}). 

\vspace{0.3cm}
\noindent{\bf Surgery Theorem.} {\em Let $X$ be a smooth manifold. If $X$ admits a metric of positive scalar curvature, then so does any manifold which is obtained from $X$ by surgery in codimension at least three.}
\vspace{0.3cm}

\noindent A consequence of this theorem was an enormous increase in the number of known examples of manifolds which admit psc-metrics. Moreover, the proof by Gromov and Lawson is constructive. More precisely, given a psc-metric $g$ on $X$, there is an explicit technique for building a new psc-metric $g'$ on $Y$, the manifold obtained by surgery. Shortly, we will provide a brief review of this construction. Before doing this, we need to introduce some notation. 

We denote, by $ds_n^{2}$,  the standard round metric of radius $1$ on the sphere $S^{n}$. Let $f:[0, \infty)\rightarrow[0, \infty)$ be a smooth function which satisfies the following conditions.
\begin{enumerate}
\item[{\bf (i)}] $f(0)=0$, $f'(0)=1$.
\item[{\bf (ii)}] The $m^{th}$ derivative $f^{(m)}(0)=0$, when $m$ is even.
\item[{\bf (iii)}] $f(t)>0$, when $t>0$.
\end{enumerate}
Then, the metric $dr^{2}+f(r)^{2}ds_{n-1}^{2}$ on $(0, \infty)\times S^{n-1}$ extends uniquely to a smooth metric on the plane $\mathbb{R}^{n}$, where $r$ denotes the radial distance coordinate. This follows from the results of Chapter 1, Section 3.4 of \cite{P}. Moreover, the resulting ``warped product metric" is radially symmetric and has scalar curvature $R$ given by the following formula.
\begin{equation}\label{Rcurv}
R=-2(n-1)\frac{f''}{f}+(n-1)(n-2)\frac{1-(f')^{2}}{f^{2}}.
\end{equation}

A important example of such a metric is determined as follows.
Let $f_1:[0, \infty)\rightarrow[0, \infty)$ be a smooth function which satisfies the following conditions.
\begin{enumerate}
\item[{\bf (i)}] $f_1(t)=\sin{t}$, when $t$ is near $0$.
\item[{\bf (ii)}] $f_1(t)=1$, when $t\geq\frac{\pi}{2}$.
\item[{\bf (iii)}] ${f_{1}}''(t)< 0$, when $0\leq t<\frac{\pi}{2}$.
\end{enumerate}
More generally, for each $\delta>0$, the function $f_\delta:[0,\infty)\rightarrow[0,\infty)$ is defined by the formula
\begin{equation*}
f_{\delta}(t)=\delta f_1 (\frac{t}{\delta}).
\end{equation*}
By restricting $f_{\delta}$ to the interval $(0,b]$, where $b\geq\delta\frac{\pi}{2}$, the metric $dt^{2}+f_{\delta}(t)^{2}ds_{n-1}^{2}$ on $(0,b]\times S^{n-1}$, extends uniquely to a smooth $O(n)$-symmetric metric on the disk $D^{n}$. This metric, known as a {\em torpedo metric}, is denoted $\gtor^{n}(\delta)$; see Fig. \ref{torpedo}. It is a round $n$-sphere of radius $\delta$ near the centre of the disk, and a standard product of $(n-1)$-spheres of radius $\delta$ near the boundary, at least when $b>\delta\frac{\pi}{2}$. In the case when $b=\delta\frac{\pi}{2}$, the metric is only a product near the boundary infinitesimally, though it smoothly attaches to a standard product $dt^{2}+\delta ds_{n-1}^{2}$. This is sufficient for our purposes.  From equation \ref{Rcurv}, we see that the scalar curvature of such a metric can be bounded below by an arbitrarily large positive constant, by choosing sufficiently small $\delta$.  
\begin{figure}[!htbp]
\vspace{-2cm}
\begin{picture}(0,0)%
\includegraphics{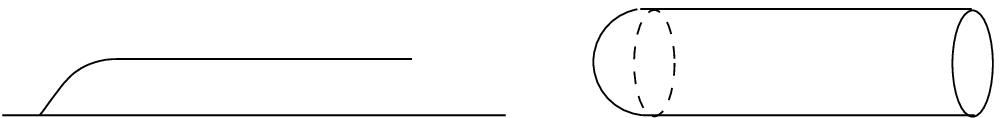}%
\end{picture}%
\setlength{\unitlength}{3947sp}%
\begingroup\makeatletter\ifx\SetFigFont\undefined%
\gdef\SetFigFont#1#2#3#4#5{%
  \reset@font\fontsize{#1}{#2pt}%
  \fontfamily{#3}\fontseries{#4}\fontshape{#5}%
  \selectfont}%
\fi\endgroup%
\begin{picture}(5079,1559)(1902,-7227)
\end{picture}%
\caption{The function $f_\delta$ and the resulting torpedo metric}
\label{torpedo}
\end{figure}   

\begin{Remark} We have made a slight generalisation to the definition of torpedo metric given in \cite{Walsh1}, to allow for an infinitesimal product near the boundary. As we can still smoothly attach a standard product to this boundary, all of the results in \cite{Walsh1} still hold. In general, we will suppress the $\delta$ term when writing $\gtor^{n}(\delta)$ and simply write $\gtor^{n}$, knowing that we may choose $\delta$ to be arbitrarily small if necessary. 
\end{Remark}

We will now describe the Gromov-Lawson surgery technique. The set-up described here will be used throughout the rest of the paper. Fix a Riemannian metric $\m$ on $X$ and an embedding $i_0:S^{p}\hookrightarrow X$.  The Riemannian metric $\m$ is a reference metric and is not required to have positive scalar curvature. We assume also that the embedding $i_0$ has trivial normal bundle, denoted by $\N$. Moreover, let $p+q+1=n$ with $q\geq 2$.
By choosing an orthonormal frame for $\N$ over $i_0(S^{p})$, we specify a bundle isomorphism $\phi:S^{p}\times \mathbb{R}^{q+1}\rightarrow\mathcal{N}$. Points in $S^{p}\times \mathbb{R}^{q+1}$ will be denoted $(y,x)$. Let $r$ denote the standard Euclidean radial distance function in $\mathbb{R}^{q+1}$ and let $D^{q+1}({\rho})=\{x\in\mathbb{R}^{q+1}:r(x)\leq{\rho}\}$ denote the standard Euclidean disk of radius ${\rho}$. 
We will choose $\bar{\rho}>0$ sufficiently small, so that the composition $i_\rho=\exp\circ\phi|_{S^{p}\times D^{q+1}({\rho})}$, where $\exp$ denotes the exponential map with respect to the metric $\m$, is an embedding for all $\rho\in (0,\bar{\rho}]$. We will also denote by $N({\rho})$, the tubular neighbourhood of $i_0 (S^{p})$, which is the image of each such embedding. This information is summarised in the commutative diagram below, where $p_1$ denotes projection onto the first factor. 

\hspace{2cm}
\vspace{0.5cm}
\begin{tikzpicture}[description/.style={fill=white,inner sep=2pt}] 
\matrix (m) [matrix of math nodes, row sep=3em, 
column sep=2.5em, text height=1.5ex, text depth=0.25ex] 
{ S^{p}\times D^{q+1}({\rho}) & & S^{p}\times\mathbb{R}^{q+1} & & \N \\ 
&  & S^{p}  & & X\\ }; 
\path[->,font=\scriptsize] 
(m-1-3) edge node[auto] {$ \phi $} (m-1-5) 
(m-1-5) edge node[auto] {$\exp $} (m-2-5) 
(m-1-1) edge node[auto] {$ p_1 $} (m-2-3) 
(m-1-3) edge node[auto] {$ p_1 $} (m-2-3); 
\path[right hook->,font=\scriptsize] 
(m-1-1) edge node[auto]  {}(m-1-3) 
(m-2-3) edge node[auto]  {$i_0$}(m-2-5); 
\end{tikzpicture} 

Now, let $g$ be any psc-metric on $X$. In their proof, Gromov and Lawson show that  $g$ can be replaced by a psc-metric $g_{std}$, which satisfies the following properties.

\begin{enumerate}
\item{}$g_{std}=g$ on $X\setminus N(\bar{\rho})$.
\item{} For some $\rho_0\in (0,\bar{\rho})$ and some $\delta>0$, $g_{std}=ds_p^{2}+g_{tor}^{q+1}(\delta)$ on $N(\rho_0)$. 
\end{enumerate}
From here, the Surgery Theorem follows easily, as the standard part of $g_{std}$ can easily be replaced by a metric of the form $g_{tor}^{p+1}+\delta^{2}ds_{q}^{2}$.
A detailed analysis of this construction is performed in \cite{Walsh1} and so we will be as brief as possible. We will consider the construction as consisting of two stages.
\\

\noindent {\bf Stage 1.} We begin by constructing a particular hypersurface $M$ in $N(\bar{\rho})\times \mathbb{R}$, where $N(\bar{\rho})\times\mathbb{R}$ is equipped with the metric $g+dt^{2}$. The hypersurface is obtained by pushing out bundles of geodesic spheres of radius $r$ in $N(\bar{\rho})$, with respect to a smooth curve $\gamma$ in the $t-r$-plane of the type depicted in the first image in Fig. \ref{glcurves} below. This curve consists of three straight line segments: vertical, tilted and horizontal, which are connected by a pair of concave upward curves. Finally, the horizontal piece is connected to the $t$-axis by means of a curve of downward concavity which intersects the axis as a circular arc. Given such a curve $\gamma$, we denote by $g_{\gamma}$, the metric obtained by replacing $g$ with the induced hypersurface metric. The vertical segment of $\gamma$ allows for a smooth transition to the original metric on the rest of $X$. The fact that such a curve can be constructed to ensure positive scalar curvature of the induced metric is proved in \cite{GL} (although a minor error is later corrected in \cite{RS}). Of crucial importance is the fact that there is a sphere factor of dimension at least two, the curvature of which may be made arbitrarily large. We will refer to such curves as Gromov-Lawson curves. In \cite{Walsh1}, we prove the following stronger statement. 

\begin{Lemma}{\cite{Walsh1}}\label{glcurvelemma}
For each Gromov-Lawson curve $\gamma$, there is a homotopy $\gamma_{s}, s\in I$, through smooth curves in the $t-r$-plane, which satisfies the following conditions.
\begin{enumerate}
\item{} $\gamma_0$ is the segment $[0, \bar{\rho}]$ on the vertical axis.
\item{} Each curve $\gamma_s$ begins at the point $(0, \bar{\rho})$, proceeds initially as a vertical segment and finishes as a curve which, infinitesimally at least, intersects the $t$-axis as the arc of a circle.
\item{} $\gamma_1=\gamma$
\item{} The induced metric $g_s=g_{\gamma_{s}}$, has positive scalar curvature for all $s\in I$.
\end{enumerate}
\end{Lemma}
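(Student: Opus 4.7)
The plan is to construct the homotopy by reversing, in a parameter, the geometric moves used to build $\gamma$ itself. A Gromov-Lawson curve is determined by a small collection of geometric parameters: the length of the initial vertical segment, the tilt angle $\theta$ of the slanted segment, the radii and positions of the two concave-upward arcs, the length of the horizontal segment, and the radius of the final downward-concave arc that meets the $t$-axis tangentially. I would choose a continuous family of such parameter values as a function of $s \in [0,1]$, arranged so that at $s = 0$ every non-vertical feature has degenerated and $\gamma_s$ is simply $[0,\bar\rho]$ on the vertical axis, while at $s = 1$ the parameters reproduce $\gamma$. Organising the deformation into three consecutive substages---first introducing the tilted segment and the first concave-upward arc, then introducing the horizontal segment and the second concave-upward arc, and finally attaching the downward-concave arc tangent to the $t$-axis---makes properties (1), (2), (3) immediate from the construction.

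The hard part is property (4), positive scalar curvature of $g_s$ for every $s$. For this I would revisit the scalar curvature formula used in \cite{GL} for the hypersurface metric swept out along such a curve. Pointwise, this scalar curvature decomposes as a bounded contribution from the ambient metric $g+dt^{2}$, a positive term of order $1/r^{2}$ coming from the intrinsic scalar curvature of the geodesic $q$-sphere fibre, and a term involving the geodesic curvature of the curve and its angle with the vertical which may have either sign. The original Gromov-Lawson argument (as corrected in \cite{RS}) exploits the hypothesis $q \geq 2$: by restricting to sufficiently small $r$, the $1/r^{2}$ term dominates all other contributions and forces positivity.

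To upgrade this to a family statement, I would fix once and for all a threshold $\rho_{0} \in (0,\bar\rho)$ small enough that the $q$-sphere term dominates uniformly in $s$, and arrange each substage so that any newly introduced feature of $\gamma_s$ lives entirely within $r \leq \rho_{0}$ and has geodesic curvature growing continuously from zero. The concave-upward arcs, which provide the potentially negative contribution, can be given radii of curvature bounded uniformly from below in $s$, so that this contribution is controlled by a single constant depending only on $g$ and not on $s$. With these uniform choices the pointwise \cite{GL}-\cite{RS} estimate applies simultaneously to every $\gamma_s$, yielding (4). The principal delicacy is arranging the transitions between substages so that all parameters vary smoothly and each new feature is born with zero curvature; this can be achieved by requiring each parameter to depart from and arrive at its endpoint values tangentially, so that the resulting homotopy is smooth in both spatial and parametric variables.
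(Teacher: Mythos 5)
Your plan---rebuilding $\gamma$ feature by feature as $s$ increases, and re-establishing the Gromov--Lawson scalar curvature estimates uniformly in $s$ at each substage---is a genuinely different route from the paper's. The paper instead performs a single preliminary deformation which slightly tilts the horizontal segment of $\gamma$, so that the resulting curve is the graph of a function $t=t(r)$ over the whole interval $[0,\bar{\rho}]$; it then runs a linear homotopy of this graph down to the $r$-axis. The payoff, which your approach forgoes, is that a linear homotopy of graphs visibly scales down the second derivative of $t(r)$ while leaving every $r$ value fixed, so positivity of the induced scalar curvature follows essentially for free; your route instead has to re-run the $1/r^{2}$ dominance estimate of \cite{GL} (as corrected in \cite{RS}) for every $s$, with all the care that requires.

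There is also a concrete gap in your staging. You propose to attach the downward-concave terminal arc only in the final substage, but every intermediate $\gamma_s$ must already intersect the $t$-axis (at least infinitesimally) as a circular arc: this is precisely what condition (2) demands, and it is what makes $g_s$ a smooth metric on $X$ rather than on a punctured tubular neighbourhood. Once a tilted or horizontal segment has been introduced, that segment cannot run into the $t$-axis at a non-perpendicular angle, nor can it terminate at positive $r$. The transitional downward arc must therefore be present throughout, with its radius collapsing to zero as $s\to 0$ and coupled continuously to the other parameters you deform. Reordering the substages to keep this arc alive from the start is possible, and the resulting uniform control on $\rho_{0}$ then has to account for the degenerating arc radius as well; but this is exactly the bookkeeping that the paper's tilt-then-linear-homotopy shortcut is designed to sidestep.
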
  
\begin{proof}
This is done in great detail in \cite{Walsh1}. Roughly speaking, the homotopy $\gamma_s, s\in I$, is constructed by first performing a small tilt to the horizontal segment of $\gamma$, producing a curve of the type shown in the second picture of Fig. \ref{glcurves}. Then, treating this curve as the graph of a function $t=t(r)$ over the $r$ axis, a straightforward linear homotopy to the $r$-axis does the rest. Positivity of the scalar curvature is preserved by the fact that the linear homotopy decreases the second derivative of this curve but does not increase the radii of the geodesic spheres.
\end{proof}

\begin{figure}[!htbp]
\hspace{2cm}
\begin{picture}(0,0)%
\includegraphics{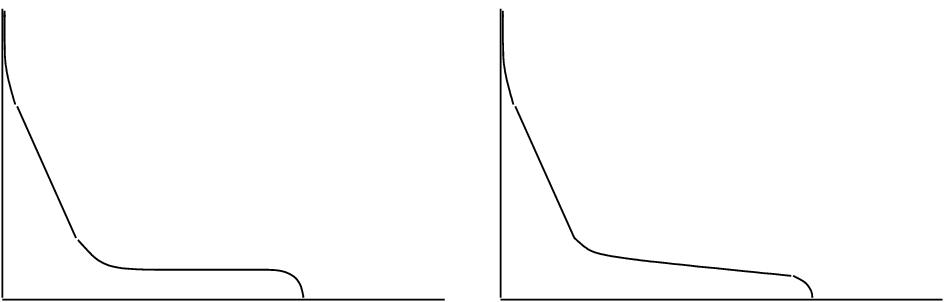}%
\end{picture}%
\setlength{\unitlength}{3947sp}%
\begingroup\makeatletter\ifx\SetFigFont\undefined%
\gdef\SetFigFont#1#2#3#4#5{%
  \reset@font\fontsize{#1}{#2pt}%
  \fontfamily{#3}\fontseries{#4}\fontshape{#5}%
  \selectfont}%
\fi\endgroup%
\begin{picture}(5079,1559)(1902,-7227)
\put(3614,-7336){\makebox(0,0)[lb]{\smash{{\SetFigFont{10}{8}{\rmdefault}{\mddefault}{\updefault}{\color[rgb]{0,0,0}$t$}%
}}}}
\put(1814,-6036){\makebox(0,0)[lb]{\smash{{\SetFigFont{10}{8}{\rmdefault}{\mddefault}{\updefault}{\color[rgb]{0,0,0}$r$}%
}}}}
\end{picture}%
\caption{Gromov-Lawson curves}
\label{glcurves}
\end{figure}   

\noindent Henceforth, we will use the term {\em isotopy} to describe a path in $\Riem^{+}{(X)}$. Also, metrics which lie in the same path component of $\Riem^{+}(X)$ will be said to be {\em isotopic}.
\\

\noindent{\bf Stage 2.}
Near ${i_0}S^{p}$, the metric $g_\gamma$ is approximately a Riemannian submersion metric on the total space of the bundle $\N(\bar{\rho})\rightarrow i_0(S^{p})$. The base metric is the induced metric on $i_0(S^{p})$. The metric on fibres is (due to the shape of $\gamma$) close to the standard torpedo metric $g_{tor}^{q+1}(\delta)$, where $\delta$ may need to be very small. A straightforward linear homotopy allows for adjustment of the fibres near $i_0(S^{p})$ to obtain a Riemannian submersion. Finally, using the formulae of O'Neill (Chapter 9 of \cite{B}), the positivity of the curvature on the disk factor allows us to homotopy through psc-submersion metrics, near $S^{p}$, to obtain the desired metric $g_{std}$. In \cite{Walsh1}, we actually prove something a little stronger.

\begin{Lemma}{\cite{Walsh1}}\label{GLisotopy}
There is an isotopy $\alpha:I=[0,1]\rightarrow\Riem^{+}(X)$ with $\alpha(0) =g$, $\alpha(1)=g_{std}$ and so that $\alpha(t)$ is a psc-metric for all $t\in I$.
\end{Lemma}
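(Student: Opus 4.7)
The plan is to concatenate three psc-preserving deformations and reparametrise to obtain a single path $\alpha:I\to\Riem^{+}(X)$ from $g$ to $g_{std}$. Since the space $\Riem^{+}(X)$ is locally path-connected in its standard topology, continuity of the concatenation follows from continuity of each piece, so the work lies in exhibiting the three pieces and verifying that positive scalar curvature is preserved throughout.

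First, I would appeal directly to Lemma \ref{glcurvelemma}. The curve $\gamma_0$ is the vertical segment $[0,\bar{\rho}]$ on the $r$-axis, and ``pushing out'' the bundle of geodesic $r$-spheres along this vertical segment recovers the graph in $N(\bar\rho)\times\R$ of the constant function $t\equiv 0$; the induced hypersurface metric is then the original metric $g$ on $N(\bar{\rho})$ (glued to $g$ on the complement). At $s=1$, $\gamma_1=\gamma$ is a Gromov-Lawson curve, and the induced metric $g_{\gamma_1}=g_\gamma$ is the Stage 1 metric. By conclusion (4) of that lemma, $s\mapsto g_{\gamma_s}$ is a path in $\Riem^{+}(X)$, providing the isotopy from $g$ to $g_\gamma$.

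Next, I would perform the Stage 2 adjustment in two sub-steps, each written as an isotopy. On the small tubular neighbourhood $N(\rho_0)$ (for some $\rho_0<\bar\rho$ determined by the shape of $\gamma$), the metric $g_\gamma$ is an \emph{approximate} Riemannian submersion over $i_0(S^p)$ whose fibre metric is $C^2$-close to a torpedo $g_{tor}^{q+1}(\delta)$. A straightforward linear homotopy in the space of fibre metrics, damped to be the identity near $\partial N(\rho_0)$, produces a path of metrics interpolating between $g_\gamma$ and an exact Riemannian submersion metric $g'$ whose fibres are honest torpedoes and whose base is the restriction of $g$ to $i_0(S^p)$. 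Since the deformation is $C^2$-small and scalar curvature depends continuously on $C^2$-data, this remains positive throughout, provided $\delta$ in the torpedo was chosen sufficiently small during Stage 1. Finally, by O'Neill's submersion formulae (\cite{B}, Chapter 9), the scalar curvature of a submersion metric with totally geodesic fibres is bounded below by the fibre scalar curvature minus terms involving the base curvature and the A-tensor; because the fibre is a torpedo of dimension $q+1\geq 3$ whose scalar curvature can be taken arbitrarily large (by choice of $\delta$), we can linearly homotope the base metric on $i_0(S^p)$ and the horizontal data to their ``standard'' values while maintaining positive scalar curvature. This produces the final leg from $g'$ to $g_{std}$.

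The main obstacle is the Stage 2 step: keeping track of the scalar curvature under the simultaneous deformation of base, fibre, and A-tensor, and organising the concatenation as a single continuous map into $\Riem^{+}(X)$. The key leverage is that the $(q+1)$-dimensional torpedo factor can be made to contribute an arbitrarily large positive term to $R$, dominating any negative contributions from the other O'Neill terms along the homotopy. Since the detailed curvature estimates are carried out in \cite{Walsh1}, I would invoke them rather than recompute, and concentrate the exposition on verifying that each of the three pieces depends continuously on its parameter and that they glue at their common endpoints.
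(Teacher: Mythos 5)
Your proposal is correct and follows essentially the same route as the paper: Stage~1 is exactly the isotopy of Lemma~\ref{glcurvelemma}, and Stage~2 is the same sequence of linear homotopies (fibre metric to torpedo, base metric to round, horizontal distribution to flat) justified by O'Neill's submersion formulae and the fact that the $(q+1)$-dimensional torpedo fibres can be given arbitrarily large scalar curvature. Both you and the paper defer the detailed curvature estimates to \cite{Walsh1}, so the arguments match in substance as well as in outline.
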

\begin{proof}
We combine the isotopy constructed in Lemma \ref{glcurvelemma} with a further isotopy through submersion metrics. This further isotopy involves a linear homotopy on fibres to obtain the desired torpedo metric on the disk factor, a linear homotopy on the base to obtain the desired round metric on $i_0{(S^{p})}$ and a linear homotopy through horizontal distributions to obtain a flat distribution and consequently a product metric. All of this is made possible by the fact that the metric on the fibres may be adjusted to carry arbitrarily large positive scalar curvature. As before, full details can be found in \cite{Walsh1}.
\end{proof} 

\section{Some observations about the Gromov-Lawson construction}
We now make a number observations, which will be of crucial importance in proving the main theorem. Firstly, for each $g\in\Riem^{+}(X)$, the isotopy $g_s, s\in I$, constructed in Lemma \ref{GLisotopy} is not unique. There are a number of choices involved in the construction, and so it is worth analysing how these choices impact the resulting isotopy. The choice of reference metric and the normal coordinate neighbourhood around the embedded surgery sphere are fixed and do not change as we vary the choice of psc-metric $g$. However, it is worth noting that choosing a different reference metric or a different set of normal coordinates has no effect on the process. We make this observation in the form of a lemma.

\begin{Lemma}\label{GLequiv}
The Gromov-Lawson isotopy $\alpha$ constructed in Lemma \ref{GLisotopy} is unaffected by the choice of reference metric $\m$ and bundle isomorphism $\phi:S^{p}\times \mathbb{R}^{q+1}\rightarrow\mathcal{N}$.
\end{Lemma}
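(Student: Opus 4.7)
My plan is to view the Gromov-Lawson construction of Lemma \ref{GLisotopy} as a continuous assignment from the background data $(\m, \phi)$ to isotopies in $\Riem^{+}(X)$, and then use path-connectedness of the space of admissible choices to produce a homotopy of isotopies. Interpreted in this way, the claim is that any two constructions yield isotopies $\alpha_0, \alpha_1$ that are homotopic as paths in $\Riem^{+}(X)$ beginning at $g$.

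First I would connect any two pairs $(\m_0, \phi_0)$ and $(\m_1, \phi_1)$ by a continuous family $(\m_s, \phi_s)_{s \in I}$. For the reference metric, the linear interpolation $\m_s = (1-s)\m_0 + s\m_1$ remains in $\Riem(X)$ throughout, since the space of Riemannian metrics is convex. For the bundle isomorphism, a choice of $\phi_s$ corresponds to a framing of the normal bundle $\mathcal{N}_s$ of $i_0(S^p)$ determined by $\m_s$; provided $\phi_0$ and $\phi_1$ lie in the same path component of framings (the standing assumption in the surgery setup), such an interpolation $\phi_s$ exists by standard obstruction arguments.

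Next I would apply Lemma \ref{GLisotopy} with data $(\m_s, \phi_s)$ for each $s$, producing an isotopy $\alpha_s : I \to \Riem^{+}(X)$. The ingredients, namely the tubular neighbourhood $N(\bar{\rho})_s = \exp_{\m_s} \circ \phi_s (S^p \times D^{q+1}(\bar{\rho}))$, the Euclidean radial coordinate $r_s$, the Gromov-Lawson curve $\gamma$, the fibrewise submersion adjustment, and the horizontal-distribution interpolation via O'Neill's formulas, all depend continuously on $s$. Compactness of $I$ permits a single Gromov-Lawson curve $\gamma$ and a single torpedo parameter $\delta$, chosen sufficiently small, to work uniformly for all $s$. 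This yields a continuous map $I \times I \to \Riem^{+}(X)$, $(s,t) \mapsto \alpha_s(t)$, with $\alpha_s(0) = g$ for every $s$, which is precisely a homotopy from $\alpha_0$ to $\alpha_1$ through Gromov-Lawson isotopies based at $g$.

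The main obstacle is establishing the continuous dependence at each stage of the construction. The argument of Lemma \ref{glcurvelemma} proceeds by explicit geometric operations (tilting and linear homotopy of curves, linear interpolation of submersion ingredients), each of which preserves positivity of scalar curvature over the compact parameter set $I$ once $\delta$ is taken uniformly small. Assembling these observations gives a family-wise Gromov-Lawson construction, and the invariance of $\alpha$ under the change of $(\m, \phi)$ follows at once.
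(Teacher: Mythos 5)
Your proof establishes a weaker conclusion than the lemma actually states, and its key interpolation step rests on an assumption the paper never makes. The lemma asserts that the isotopy $\alpha$ is \emph{literally unchanged} under a different admissible choice of $(\m,\phi)$, and the paper's own proof argues this directly: every operation in stages $1$ and $2$ is radially symmetric in the Euclidean coordinate $r$ of the normal-bundle parametrisation, so substituting $\m$ changes nothing, while changing $\phi$ amounts to an $O(p)\times O(q+1)$ action under which the entire construction is equivariant (Lemma~2.10 of \cite{Walsh2}). You reinterpret ``unaffected'' to mean ``homotopic as paths based at $g$,'' and prove that instead, by interpolating $(\m_s,\phi_s)$ and running a parametrised Gromov--Lawson construction. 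But the very next lemma --- which the paper derives as an ``immediate consequence'' of this one --- states that the \emph{subspace} ${\Riem}_{Astd}^{+}(X)=\bigcup_{g}\GL(g)$ of $\Riem^{+}(X)$ is set-theoretically fixed as $\m$ and $\phi$ vary. For that one needs the images of the Gromov--Lawson isotopies to coincide for the two choices; a homotopy between two paths does not deliver equality of their images, so the weaker reading cannot support the lemma's downstream use.

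There is a second, independent gap in the interpolation itself. Connecting $\phi_0$ to $\phi_1$ presumes the two trivialisations lie in the same path component of bundle isomorphisms $S^p\times\mathbb{R}^{q+1}\to\N$, which you call ``the standing assumption in the surgery setup.'' No such hypothesis appears in the paper. That space of isomorphisms is a torsor over $\operatorname{Maps}(S^p,GL(q+1))\simeq\operatorname{Maps}(S^p,O(q+1))$, whose set of path components $[S^p,O(q+1)]$ is in general non-trivial, so $\phi_0$ and $\phi_1$ need not be joinable by a path of framings. The lemma is phrased to hold across all bundle isomorphisms $\phi$; even the weaker homotopy conclusion would therefore be out of reach for $\phi_0,\phi_1$ in different components, whereas the paper's equivariance argument shows the output metric is insensitive to $\phi$ without any path-connectedness input.
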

\begin{proof}
In stage 1, all of our adjustments are radially symmetric and depend only on the Euclidean distance function $r$. This function is the same for any metric and so the choice of reference metric $\m$ is unimportant.  In stage 2, adjustments consist of linear homotopies to metrics which, again, are standard when $r$ is near zero and transition to the original metric by a smooth adjustment in the radial direction. Once again, the choice of $\m$ is unimportant. 

The choice of bundle isomorphism $\phi$, is a choice of orthonormal frame on the embedded surgery sphere. In particular, we can equate all such choices with the obvious action of $O(p)\times O(q+1)$ on this bundle. In Lemma 2.10 of \cite{Walsh2}, we show that the entire Gromov-Lawson construction is equivariant with respect to this action and so the choice of normal coordinates is unimportant. 
\end{proof}

There are some choices which do make a difference. For example, there are many possible choices of Gromov-Lawson curve $\gamma$ which will work. Furthermore, the radius of the $q$-dimensional sphere factor can be chosen to be arbitrarily small. However, these choices only mildly affect the isotopy. Overall, the construction does not involve any significant choices and is mostly a sequence of linear homotopies from arbitrary to standard pieces. With this in mind we make the following definition.

\begin{Definition}
{\em For each metric $g\in \Riem^{+}(X)$, isotopies of the type described in Lemma \ref{GLisotopy} are known as {\em Gromov-Lawson isotopies}}.
\end{Definition}
 
Before making our second observation, we define some important subspaces of $\Riem^{+}(X)$. 
Firstly, for each metric $g\in \Riem^{+}{(X)}$, let $\GL(g)$ be the subspace of $\Riem^{+}(X)$ obtained by taking the union of all metrics contained in all Gromov-Lawson isotopies emanating from $g$.
Next, we consider the space of psc-metrics on $X$ which are standard near $i_{0}(S^{p})$. This is

\begin{equation*}
\Riem_{std}^{+}(X)=\{g\in \Riem^{+}(X):i_\rho^{*}(g)=ds_p^{2}+g_{tor}^{q+1}{\text {\rm, for some }} \rho\in(0,\bar{\rho}] \}.
\end{equation*}

\noindent Finally, will also be interested in the space of all metrics obtained by Gromov-Lawson isotopy of these metrics. Thus, we define

\begin{equation*}
{\Riem}_{Astd}^{+}(X)=\bigcup_{g\in \Riem_{std}^{+}(X)} \GL(g) .
\end{equation*}

\noindent We now make our second observation in the form of a lemma.

\begin{Lemma}
The subspace ${\Riem}_{Astd}^{+}(X)\subset \Riem^{+}(X)$ remains fixed if we vary the choice of reference metric $\m$ or the bundle isomorphism $\phi$.
\end{Lemma}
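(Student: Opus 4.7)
The plan leverages Lemma \ref{GLequiv}, which tells us that the subspace $\GL(g) \subset \Riem^{+}(X)$ attached to each psc-metric $g$ is intrinsic to $g$ and the embedding $i_0$, not depending on the auxiliary data $(\m, \phi)$. Consequently, all possible $(\m, \phi)$-dependence of
\begin{equation*}
\Riem_{Astd}^{+}(X) \;=\; \bigcup_{g \in \Riem_{std}^{+}(X)} \GL(g)
\end{equation*}
must come from the indexing set $\Riem_{std}^{+}(X)$, and the task reduces to verifying that the union is insensitive to the change $(\m,\phi)\to(\m',\phi')$.

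To this end, given $g \in \Riem_{std}^{+}(X, \m, \phi)$, I would apply the Gromov-Lawson construction of Lemma \ref{GLisotopy} to $g$ using the data $(\m', \phi')$, producing a GL isotopy $\alpha$ from $g$ to a metric $g^{*}$ which is standard with respect to $(\m', \phi')$, i.e.\ $g^{*} \in \Riem_{std}^{+}(X, \m', \phi')$. By Lemma \ref{GLequiv}, this isotopy coincides with the one produced by the $(\m, \phi)$-construction, so $g$ and $g^{*}$ lie on a common intrinsic Gromov-Lawson isotopy. The next step is to argue that $\GL(g) \subset \GL(g^{*})$: every GL isotopy emanating from $g$ can be re-parametrized, or concatenated with $\alpha$, so as to give a GL isotopy emanating from $g^{*}$. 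Each stage of the Gromov-Lawson construction (cf.\ Lemmas \ref{glcurvelemma}, \ref{GLisotopy}) is a linear homotopy of auxiliary data---the Gromov-Lawson curve $\gamma_{s}$, the torpedo fibre, the round base, and the horizontal distribution---and these linear homotopies can be composed so as to preserve the \emph{GL isotopy} structure with the new starting point $g^{*}$.

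Granted $\GL(g) \subset \GL(g^{*})$, one obtains the inclusion $\Riem_{Astd}^{+}(X, \m, \phi) \subset \Riem_{Astd}^{+}(X, \m', \phi')$; the reverse inclusion is symmetric, yielding the lemma. I expect the chief technical obstacle to be the concatenation/reparametrization step, which requires careful accounting of how the individual linear-homotopy stages in the construction glue into a single GL isotopy emanating from $g^{*}$ rather than from $g$. This is essentially a bookkeeping exercise given the explicit description of the stages in \cite{Walsh1}, but it is the only place where non-formal work is required; the rest of the proof is an immediate consequence of Lemma \ref{GLequiv}.
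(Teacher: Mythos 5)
The paper's own proof of this lemma is a one-line deferral: the invariance of $\Riem_{Astd}^{+}(X)$ is claimed to follow immediately from Lemma~\ref{GLequiv}, the idea being that both constituents of the union $\Riem_{Astd}^{+}(X)=\bigcup_{g\in\Riem_{std}^{+}(X)}\GL(g)$ are insensitive to $(\m,\phi)$ --- $\GL(g)$ by Lemma~\ref{GLequiv} directly, and the ``standard near $i_0(S^p)$'' condition by the same radial-symmetry and $O(p)\times O(q+1)$-equivariance reasoning used in that lemma's proof. You are right to notice that the dependence of the indexing set $\Riem_{std}^{+}(X)$ on $(\m,\phi)$ is a point the paper passes over quickly, and your instinct to address it separately is reasonable.

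However, the route you propose to handle it has a genuine gap at the step $\GL(g)\subset\GL(g^{*})$. Recall that a Gromov--Lawson isotopy is, by definition, an isotopy of the specific type produced by the construction in Lemma~\ref{GLisotopy}: it proceeds monotonically from an arbitrary metric toward a standard one via the family of curves $\gamma_s$ in Stage~1 and the submersion adjustments in Stage~2. Concatenating the reverse of $\alpha$ with another GL isotopy emanating from $g$ produces a path from $g^{*}$, but it is not of this form --- it first runs the construction backward and then forward, and there is no reason the composite path is realized as the output of Lemma~\ref{GLisotopy} for any choice of GL curve or auxiliary data. In particular, your claimed inclusion forces $g\in\GL(g^{*})$; but $g^{*}$ is already standard, and GL isotopies emanating from a standard metric only push further toward (or remain within) standard form --- they do not recover an arbitrary predecessor $g$. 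So $\GL(g)\subset\GL(g^{*})$ is not established, and as you yourself flag, the ``bookkeeping'' required is precisely where the argument fails rather than a routine detail. The more direct line --- the one the paper takes --- is to argue that the \emph{condition} defining $\Riem_{std}^{+}(X)$ is itself $(\m,\phi)$-independent, so that the union is over the same index set and the claim does reduce to Lemma~\ref{GLequiv}.
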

\begin{proof}
This is an immediate consequence of Lemma \ref{GLequiv}.
\end{proof}

Our final observation, which also takes the form of a lemma, is the key technical step in proving our main theorem.
  
\begin{Lemma}\label{Chetech}
The spaces $\Riem_{std}^{+}(X)$ and ${\Riem}_{Astd}^{+}(X)$ homotopy equivalent.
\end{Lemma}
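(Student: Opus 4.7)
The plan is to construct a strong deformation retraction $H\colon \Riem_{Astd}^{+}(X)\times I \to \Riem_{Astd}^{+}(X)$ onto $\Riem_{std}^{+}(X)$ by packaging the Gromov-Lawson isotopies of Lemma \ref{GLisotopy} into a single continuous family. For each $g \in \Riem_{Astd}^{+}(X)$, let $\alpha_g \colon I \to \Riem^{+}(X)$ denote its Gromov-Lawson isotopy, with $\alpha_g(0)=g$ and $\alpha_g(1)=g_{std}\in \Riem_{std}^{+}(X)$; then setting $H(g,t):=\alpha_g(t)$ would be the obvious candidate. The whole proof then reduces to three tasks: (a) verifying that $\alpha_g$ depends continuously on $g$, (b) arranging that $\alpha_g$ is the constant isotopy whenever $g \in \Riem_{std}^{+}(X)$, and (c) verifying that $H$ actually takes values in $\Riem_{Astd}^{+}(X)$.

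For task (a), I would revisit the construction of Stages 1 and 2 and check that, once the reference data $\m$ and $\phi$ are fixed (which by Lemma \ref{GLequiv} is harmless), the choice of Gromov-Lawson curve $\gamma$, the homotopy $\gamma_s$ of Lemma \ref{glcurvelemma}, the torpedo scale $\delta$, and the linear homotopies on fibres, on the base $i_0(S^p)$, and through horizontal distributions can all be made to depend continuously on $g$ in the smooth topology. Concretely this means showing that the relevant estimates (scalar curvature lower bounds, second fundamental form bounds on the geodesic sphere bundles, the bending radii controlling $\gamma$) are continuous functions of $g$, and selecting $\gamma = \gamma(g)$, $\delta = \delta(g)$ by an explicit continuous rule, as in the family version of the Gromov-Lawson argument developed in \cite{Walsh1}.

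For task (b), I would exploit the flexibility in the choice of $\gamma$: if $g$ is already of the standard form $ds_p^{2}+g_{tor}^{q+1}(\delta)$ on $N(\rho_0)$, then the vertical segment $[0,\bar\rho]$ of the axis together with the circular arc determined by $g_{tor}^{q+1}(\delta)$ is already a legitimate Gromov-Lawson curve for $g$, and all of the linear homotopies of Stage 2 are from the desired fibre/base/distribution to themselves. Choosing the continuous rule in task (a) so that it returns precisely this trivial data on $\Riem_{std}^{+}(X)$ gives $\alpha_g \equiv g$ there. The delicate point is to interpolate continuously between generic and standard $g$ so that the amount of deformation performed by $\alpha_g$ goes to zero as $g$ approaches $\Riem_{std}^{+}(X)$; this is the main technical obstacle and is where the bulk of the work will live.

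Finally, for task (c) I would argue as follows. Given $g\in \Riem_{Astd}^{+}(X)$, there exist $g_0 \in \Riem_{std}^{+}(X)$ and a Gromov-Lawson isotopy $\beta \colon I \to \Riem^{+}(X)$ emanating from $g_0$ with $g$ lying on the image of $\beta$. A point $\alpha_g(t)$ is then obtained by first running the GL procedure on $g_0$ partway to reach $g$, and then running a further GL procedure on $g$ partway to reach $\alpha_g(t)$. Since both procedures are merely sequences of linear homotopies of curves and submersion data adapted to the same normal neighbourhood $N(\bar\rho)$, the concatenation can be reorganised as a single GL-isotopy emanating from $g_0$ (explicitly, by applying the continuous family GL procedure to $g_0$ with the modified parameter profile corresponding to the concatenated deformations). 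This yields $\alpha_g(t) \in \GL(g_0) \subset \Riem_{Astd}^{+}(X)$, completing the verification that $H$ is a strong deformation retraction and hence that $\Riem_{Astd}^{+}(X)\simeq \Riem_{std}^{+}(X)$.
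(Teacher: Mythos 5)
Your approach differs genuinely from the paper's, and it has a gap that is serious enough to call the argument incomplete. The paper's proof does \emph{not} use the Gromov--Lawson isotopy of Lemma~\ref{GLisotopy} as the retracting homotopy. Instead it makes a structural observation that you never use: because every $g\in\Riem_{Astd}^{+}(X)$ lies on a GL-isotopy emanating from a metric that is standard near $i_0(S^p)$, $g$ necessarily has the radially-symmetric form $ds_p^2+dr^2+w(r)^2ds_q^2$ on some $N(\rho_{std})$, with a smooth warping function $w$ satisfying $w'(0)=1$, $w'\geq 0$, $w^{(m)}(0)=0$ for even $m$, and $w''\leq 0$ near $0$. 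The paper then builds the deformation retraction entirely by hand, as a continuous modification of the single scalar function $w$ (split into four cases according to which of $\rho_{std},\rho_0',\rho_0''$ is smallest), and never invokes the GL procedure again. This is why the retraction is obviously the identity on $\Riem_{std}^{+}(X)$: a torpedo $w$ is already in case~1, where the linear homotopy is constant.

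Your task~(b) is where the approach runs into real trouble, and you have flagged it but not resolved it. The curve you propose as a ``legitimate Gromov--Lawson curve'' for a standard $g$ — the vertical segment plus a torpedo arc — is not of the GL type at all: a GL curve has a tilted segment and a horizontal segment joined by concave-upward bends, whereas the torpedo profile $f_\delta(t)=\delta\sin(t/\delta)$ capped off constant has a completely different shape. Consequently, Stage~1 of the genuine GL procedure, applied to a metric that is already standard, performs a nontrivial bend rather than the identity, and arranging for the continuous selection of $\gamma(g)$ to degenerate to the identity exactly on $\Riem_{std}^{+}(X)$ while remaining a valid GL curve off it would require interpolating between curve shapes in a way that the GL scalar-curvature estimates do not directly support. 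Task~(c) also has a gap: the claim that concatenating a GL-isotopy from $g_0$ to $g$ with a GL-isotopy from $g$ can be ``reorganised as a single GL-isotopy emanating from $g_0$'' is not established and is not obviously true — the GL procedure is a specific two-stage sequence, not closed under composition. The paper sidesteps both difficulties precisely by retracting via the $w$-function rather than re-running the GL construction, so you should either adopt that characterization or supply a real argument for (b) and (c) before your outline becomes a proof.
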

\begin{proof}
We will exhibit a deformation retract of the space ${\Riem}_{Astd}^{+}(X)$ down to the subspace $\Riem_{std}^{+}(X)$. More precisely, we will describe a process which continuously adjusts metrics in ${\Riem}_{Astd}^{+}(X)$, making them standard near $i_0{(S^{p})}$, while having no effect on metrics which are already in $\Riem_{std}^{+}(X)$. Before doing this we need to analyse the effect of the Gromov-Lawson isotopy on metrics in $\Riem_{std}^{+}(X)$.

Let $g\in \Riem_{std}^{+}(X)$. Let $\rho_{std}\in [0,\bar{\rho}]$ be the supremum over all $\rho\in   [0,\bar{\rho}]$, for which the metric $g$ takes the form 
\begin{equation}\label{radsym}
g|_{N(\rho_{std})}=ds_{p}^{2}+dr^{2}+w(r)^{2}ds_{q}^{2},
\end{equation}
on $N(\rho)$, where $w:(0, \bar{\rho})\rightarrow (0,\infty)$ is a smooth function. Note that, when $r$ is near $0$, the function $w$ is the torpedo function $f_{\delta}$ described earlier. We now restrict our attention to the region $N(\rho_{std})$. Applying stage 1 of the Gromov-Lawson construction to the metric $g$ produces an isotopy through metrics, which at all times takes the form shown in equation \ref{radsym}. Furthermore, although the function $w$ may not always be a torpedo function near $0$, it will at all times satisfy the following conditions.
\begin{enumerate}
\item{} $w'(0)=1$ and $w'(r)\geq 0$, for all $r\in(0, \rho_{std})$.
\item{} $w^{(m)}(0)=0$, when $m$ is even.
\item{} $w''(r)\leq 0$, when $r$ is near $0$.
\end{enumerate}
\noindent Finally, as each such metric is a standard product near $i_{0}(S^{p})$ and as stage 2 of the Gromov-Lawson construction is essentially just a sequence of linear homotopies to the standard form, we may conclude that stage 2 leaves all of these metrics unchanged. Thus, for each $g\in {\Riem}_{Astd}^{+}(X)$, there are parameters $0<\rho_{std}\leq \bar{\rho}$, so that on $N(\rho_{std})$, $g$ takes the form described in equation \ref{radsym} for some smooth function $w:(0, \rho_{std})\rightarrow (0,\infty)$ (determined by $g$) which, although not necessarily torpedo, satisfies the above conditions. 

Considering elements of ${\Riem}_{Astd}^{+}(X)$ in this way, allows us to construct the necessary deformation retract by focussing on the corresponding $w$ functions. In the case where $w$ is torpedo near $0$, we wish to make no change. In all other situations we wish to make the function $w$ torpedo-like. Before considering the various cases, we need some notation. Let $\rho'_{0}$ denote the infimum over $\rho\in(0, \rho_{std}]$ for which $w'(\rho)=0$. Let $\rho''_{0}$ denote the infimum over $\rho\in(0, \rho_{std}]$ for which $w''(\rho)=0$. Finally let $\rho_0$ be defined as
\begin{equation*}
 \rho_{0}=\min\{\rho_{std}, \rho'_{0}, \rho''_{0} \}.
\end{equation*}
The function $w$ (and consequently the parameter $\rho_{0}$) varies continuously over the space of metrics ${\Riem}_{Astd}^{+}(X)$. In turn, our adjustments will depend continuously on $w$. There are four cases to consider. To aid the reader we illustrate these respectively in Fig. \ref{deformglcurve} below. In each case, the dashed vertical line is $t=\rho_0$.
\begin{enumerate}
\item{} Suppose $\rho_{0}=\rho'_{0}=\rho''_{0}\leq \rho_{std}$. In this case, we perform a standard linear homotopy, along $(0, \rho_0]$, from $w$ to the torpedo function $f_{w(\rho_{0})}$. 
\item{} Next, suppose $\rho_{0}=\rho'_{0}\leq \rho_{std}$ and $\rho'_{0}<\rho''_{0}$. In this case, $w'$ reaches $0$ before $w''$. Near $\rho_0$, $w'$ is small. Thus, it follows from formula \ref{Rcurv}, that the function $w$ may be adjusted near $\rho_0$ and satisfy positivity of the scalar curvature, by keeping $w''$ non-positive. It is therefore possible to specify a parameter $\epsilon(w)$, depending continuously on $w$, and a homotopy of $w$ which is the identity off $(\rho_0- \epsilon(w), \rho_0+\epsilon(w))$, and so that the resulting function satisfies case 1. This homotopy involves continuously increasing the second derivative of $w$, so that it equals zero at $\rho_0$, but remains non-positive at all times. We then proceed as in case $1$.
\item{} Now, suppose $\rho_{0}=\rho''_{0}\leq \rho_{std}$ and $\rho''_{0}<\rho'_{0}$. Once again, we want to adjust $w$ so that it belongs in case 1. Straighten out the graph of $w$ immediately to the left of $\rho_{0}$, so that for some parameter $\epsilon(w)$, $w$ proceeds as a straight line segment with slope $w'(\rho_0)$ on $(\rho_{0}-\epsilon(w), \rho_0]$. This will necessitate homothetically shrinking the graph along of $w$ along $(0, \rho_{0}-\epsilon(w))$, but won't damage the positivity of the scalar curvature. Finally, on some subinterval $(\rho_{0}-\epsilon(w), \rho_{0}-\epsilon(w)+\epsilon')\subset (\rho_{0}-\epsilon(w), \rho_0]$, perform a gradual increase in the second derivative to satisfy the conditions of case 1. It follows from the bending argument of Gromov and Lawson, described in  detail in \cite{Walsh1}, that provided $w(\rho_{0}-\epsilon(w))$ is chosen small enough, this can be done without damaging positivity of the scalar curvature.
\item{} Finally, suppose $\rho_0=\rho_{std}<\min\{\rho'_{0}, \rho''_{0} \}$. In this case, $w$ is concave downward on all of $(0, \rho_{std}]$. Immediately to the left of $\rho_{std}$, adjust $w$ to bring the second derivative up to $0$ on some point $\rho_0-\epsilon(w)$. This can be done so that $w''\leq 0$ and so positive scalar curvature is maintained. Then proceed as in case 3. 
\end{enumerate}
\vspace{-1cm}
\begin{figure}[!htbp]
\hspace{-2cm}
\begin{picture}(0,0)%
\includegraphics{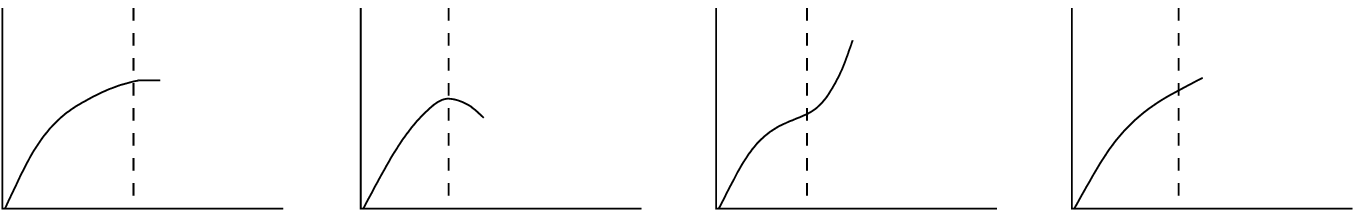}%
\end{picture}%
\setlength{\unitlength}{3947sp}%
\begingroup\makeatletter\ifx\SetFigFont\undefined%
\gdef\SetFigFont#1#2#3#4#5{%
  \reset@font\fontsize{#1}{#2pt}%
  \fontfamily{#3}\fontseries{#4}\fontshape{#5}%
  \selectfont}%
\fi\endgroup%
\begin{picture}(5079,1559)(1902,-7227)
\end{picture}%
\caption{The various forms $w$ may take near $0$.}
\label{deformglcurve}
\end{figure}   
\noindent Metrics which are already standard will immediately lie in case 1, in which case the corresponding linear homotopy will have no effect. This complete the proof.
\end{proof}

\section{The Main Theorem}
We are now in a position to prove our main theorem.

\begin{Theorem}\label{Chernyshthm}
Let $X$ be a smooth compact manifold of dimension $n$. Suppose $Y$ is obtained from $X$ by surgery on a sphere $i:S^{p}\hookrightarrow X$ with $p+q+1=n$ and  $p,q\geq 2$. Then the spaces $\Riem^{+}(X)$ and $\Riem^{+}(Y)$ are homotopy equivalent.
\end{Theorem}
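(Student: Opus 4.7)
The plan is to establish $\Riem^{+}(X) \simeq \Riem^{+}(Y)$ by passing, on each side, through the chain of inclusions
\[
\Riem^{+}_{std}(X) \hookrightarrow \Riem^{+}_{Astd}(X) \hookrightarrow \Riem^{+}(X),
\]
and then matching the two subspaces of standard metrics directly. The leftmost inclusion is already a deformation retract by Lemma \ref{Chetech}, so the real content is to upgrade the inclusion on the right to a homotopy equivalence, and then to identify $\Riem^{+}_{std}(X)$ with $\Riem^{+}_{std}(Y)$.

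For the right-hand inclusion, I would appeal to Palais's theorem that both $\Riem^{+}(X)$ and $\Riem^{+}_{Astd}(X)$ are dominated by CW complexes, so Whitehead's theorem reduces the task to showing that every relative homotopy group $\pi_{k}(\Riem^{+}(X), \Riem^{+}_{Astd}(X))$ vanishes. Given a representative $\phi : (D^{k}, S^{k-1}) \to (\Riem^{+}(X), \Riem^{+}_{Astd}(X))$, I would run the Gromov-Lawson construction parametrically: Lemmas \ref{glcurvelemma} and \ref{GLisotopy} produce, for each $\phi(x)$, an isotopy to a metric in $\Riem^{+}_{std}(X)$ through psc-metrics, and Lemma \ref{GLequiv} guarantees that the construction depends only on $\phi(x)$ and not on auxiliary reference data, so it glues into a continuous family over $D^{k}$. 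Boundary points, which by hypothesis already lie in $\Riem^{+}_{Astd}(X) = \bigcup_{g \in \Riem^{+}_{std}(X)} \GL(g)$, are moved inside $\Riem^{+}_{Astd}(X)$ throughout, so the resulting deformation stays in the prescribed pair. The main obstacle is precisely making this family construction rigorous while preserving positive scalar curvature uniformly over the parameter space; this is the content of the parametric surgery technique of \cite{Walsh1}, and the codimension condition $q \geq 2$ enters here to supply a sphere factor whose large positive curvature absorbs the bending terms.

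For the identification on the standard level, observe that a metric in $\Riem^{+}_{std}(X)$ is prescribed on the tubular neighbourhood $N(\rho_{0})$ of $i_{0}(S^{p})$ as $ds_{p}^{2} + g_{tor}^{q+1}$ and is free (subject to positive scalar curvature) on the complement $X \setminus N(\rho_{0})$. Surgery removes $\bar{i}(S^{p} \times \oD)$ and attaches $D^{p+1} \times S^{q}$ bearing the complementary standard metric $g_{tor}^{p+1} + ds_{q}^{2}$; the torpedo radii can be arranged so that the two standard pieces share a common collar, giving a well-defined metric on $Y$ in $\Riem^{+}_{std}(Y)$. The inverse operation is complementary surgery, and the correspondence is natural and continuous in the non-standard part, yielding a homeomorphism $\Riem^{+}_{std}(X) \cong \Riem^{+}_{std}(Y)$.

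Chaining these equivalences with the mirror argument on the $Y$-side gives
\[
\Riem^{+}(X) \simeq \Riem^{+}_{Astd}(X) \simeq \Riem^{+}_{std}(X) \cong \Riem^{+}_{std}(Y) \simeq \Riem^{+}_{Astd}(Y) \simeq \Riem^{+}(Y),
\]
which proves the theorem. The hard step is unquestionably the parametric Gromov-Lawson argument used to kill $\pi_{k}(\Riem^{+}(X), \Riem^{+}_{Astd}(X))$; the remaining pieces are either the deformation retract of Lemma \ref{Chetech}, the elementary identification of standard collars under complementary surgery, or formal concatenation of homotopy equivalences.
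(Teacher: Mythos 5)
Your proposal follows exactly the same route as the paper: the chain $\Riem^{+}(X)\simeq\Riem_{Astd}^{+}(X)\simeq\Riem_{std}^{+}(X)\cong\Riem_{std}^{+}(Y)\simeq\Riem_{Astd}^{+}(Y)\simeq\Riem^{+}(Y)$, with Lemma \ref{Chetech} supplying the deformation retract, Palais plus Whitehead reducing the remaining inclusion to vanishing of relative homotopy groups, and the parametric Gromov--Lawson isotopy of \cite{Walsh1} killing those groups. The identification of the standard subspaces by cutting out the torpedo neighbourhood and gluing in the complementary standard handle is likewise the homeomorphism $j$ the paper constructs, so this is a correct reconstruction of the paper's own argument.
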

\begin{proof}
Choose a reference metric $\m$ and a trivialisation $\phi:S^{p}\times \mathbb{R}^{q+1}\rightarrow \mathcal{N}$ as above.
Let $\bar{i}_{\bar{\rho}}:S^{p}\times D^{q+1}(\bar\rho)\hookrightarrow X$ be the framed embedding of the sphere $S^{p}$ described above, where $\bar{\rho}$ is chosen sufficiently small. The manifold $Y$ is assumed to be the manifold obtained by doing surgery on this embedding. This surgery can be canonically reversed by performing a surgery on the embedded $S^{q}$ of the attached handle. As $p,q\geq 2$, both surgeries are in codimension $\geq 3$.

Recall that $\Riem_{std}^{+}(X)$ denotes the space of psc-metrics on $X$ which are standard near $i(S^{p})$. Let $\Riem_{std}^{+}(Y)$ denote the analogous space of psc-metrics on $Y$, which are this time standard near the embedded $S^{q}\subset Y$. Note that the choice of reference metric and normal coordinates are unimportant here. Let $j: \Riem_{std}^{+}(X)\rightarrow \Riem_{std}^{+}(Y)$ be the map which is defined on a metric $g\in \Riem_{std}^{+}(X)$ as follows. Choose the smallest $\rho\in (0, \bar{\rho}]$ so that the metric induced on $N(\rho)$ is the torpedo $g_{tor}^{q+1}(\delta)$ on the disk factor. Thus, on each disk, the metric will only be infinitesimally a product near the boundary. Then attach to $g|_{X\setminus N(\rho)}$, a standard product $g_{tor}^{p+1}+\delta^{2} ds_{q}^{2}$, ensuring once again that $g_{tor}^{p+1}$ is again only infinitesimally a product near the boundary (the smallest possible torpedo metric). The resulting metric, denoted $j(g)$, is an element of $\Riem_{std}^{+}(Y)$. Furthermore, the association $g\mapsto j(g)$ defines a homeomorphism, with inverse defined completely analogously.

Applying Lemma \ref{Chetech} now gives us that the spaces ${\Riem}_{Astd}^{+}(X)$ and ${\Riem}_{Astd}^{+}(Y)$ are now homotopy equivalent and so it remains to show that ${\Riem}_{Astd}^{+}(X)$ is homotopy equivalent to ${\Riem^{+}(X)}$. It follows from the work of Palais in \cite{Palais} that the spaces ${\Riem}_{Astd}^{+}(X)$ and $\Riem^{+}(X)$ are dominated by CW-complexes. Thus, by the theorem of Whitehead, we need only show that the relative homotopy groups $\pi_{k}(\Riem^{+}(X),{\Riem}_{Astd}^{+}(X))$ are all trivial, in order to demonstrate the desired homotopy equivalence. 

Let $\lambda\in \pi_{k}(\Riem^{+}(X),{\Riem}_{Astd}^{+}(X))$. The element $\lambda$ is a homotopy equivalence class of commuting diagrams of the type shown below. 

\hspace{4cm}
\vspace{0.5cm}
\begin{tikzpicture}[description/.style={fill=white,inner sep=2pt}] 
\matrix (m) [matrix of math nodes, row sep=3em, 
column sep=2.5em, text height=1.5ex, text depth=0.25ex] 
{S^{k-1} & &  D^{k} \\ 
{\Riem}_{Astd}^{+}(X)  & & \Riem^{+}(X)\\ }; 
\path[->,font=\scriptsize] 
(m-1-1) edge node[auto] {} (m-2-1) 
(m-1-3) edge node[auto]  {}(m-2-3); 
\path[right hook->,font=\scriptsize] 
(m-1-1) edge node[auto]  {}(m-1-3)
(m-2-1) edge node[auto]  {}(m-2-3); 
\end{tikzpicture} 

\noindent Thus, $\lambda$ is represented by a family of psc-metrics in $\Riem^{+}(X)$ which is parameterised by a disk. Furthermore, the restriction of this parametrisation to the boundary $\p D^{k}=S^{k-1}$ is a map into ${\Riem}_{Astd}^{+}(X)$. In Theorem 2.13 of \cite{Walsh1}, we show that the Gromov-Lawson isotopy of Lemma \ref{GLisotopy}, holds for compact families of psc-metrics and so we may apply it to the family parameterised by the disk $D^{k}$. By definition, metrics in $\Riem_{Astd}^{+}(X)$ remain in this space throughout the isotopy, and so there is a continuous deformation through diagrams of the type above to one in which the image of the map from $D^{k}$ lies entirely in ${\Riem}_{Astd}^{+}(X)$. This completes the proof.
\end{proof}

An immediate corollary of Theorem \ref{Chernyshthm} is that, when $X$ is a simply connected spin manifold of dimension $\geq 5$, the homotopy type of the space $\Riem^{+}(X)$ is an invariant of spin cobordism. 
\begin{Corollary}\label{spinChernysh}
Let $X_0$ and $X_1$ be a pair of compact simply-connected spin manifolds of dimension $n\geq 5$. Suppose also that $X_0$ is spin cobordant to $X_1$. Then the spaces $\Riem^{+}(X_0)$ and $\Riem^{+}(X_1)$ are homotopy equivalent.
\end{Corollary}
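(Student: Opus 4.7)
The plan is to reduce Corollary \ref{spinChernysh} to the Main Theorem by showing that any spin cobordism between two simply-connected spin manifolds of dimension $n \geq 5$ can be realised as a finite sequence of surgeries, each on an embedded sphere $S^{p}$ with $p, q \geq 2$ (where $p+q+1 = n$).

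The first step is to invoke a classical surgery-theoretic result: given a spin cobordism $W^{n+1}$ between simply-connected spin manifolds $X_0$ and $X_1$ with $n \geq 5$, one may arrange a handle decomposition of $W$ (relative to $X_0$) using only handles of index $k$ with $3 \leq k \leq n-2$. To obtain this, I would start with an arbitrary handle decomposition coming from a generic Morse function on $W$, then cancel $0$-handles against $1$-handles using connectivity of $W$, trade remaining $1$-handles for $3$-handles via Smale's handle-trading theorem (this uses simple connectivity of $X_0$ and the dimension hypothesis $n \geq 5$), and finally trade $2$-handles for $4$-handles using the spin structure. A dual argument, viewing $W$ as a cobordism from $X_1$ to $X_0$, eliminates handles of index $n-1$, $n$, and $n+1$. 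This handle-simplification procedure is standard and is the very mechanism used by Gromov and Lawson in \cite{GL} when applying the Surgery Theorem to entire cobordism classes.

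Each remaining handle of index $k$ corresponds to an elementary cobordism realising a surgery on an embedded $S^{k-1}$ in the intermediate manifold. Setting $p = k-1$ and $q = n-k$, the constraints $3 \leq k \leq n-2$ translate exactly to $p, q \geq 2$, so Theorem \ref{Chernyshthm} applies at each step. Denoting the intermediate manifolds by $X_0 = Z_0, Z_1, \ldots, Z_m = X_1$, iterated application of the Main Theorem yields a chain of homotopy equivalences
\begin{equation*}
\Riem^{+}(X_0) \simeq \Riem^{+}(Z_1) \simeq \cdots \simeq \Riem^{+}(Z_{m-1}) \simeq \Riem^{+}(X_1),
\end{equation*}
whose composition is the desired equivalence.

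The only real obstacle is the handle-simplification step. The spin hypothesis is essential there to accommodate the $2$-handle-to-$4$-handle trade (an arbitrary $2$-handle attachment need not be compatible with a given spin structure), while the dimension hypothesis $n \geq 5$ is needed both for Smale's handle-trading theorem and to ensure that the admissible index range $3 \leq k \leq n-2$ is non-empty. Once this classical reduction is cited, the rest of the proof is a routine induction on the number of handles using Theorem \ref{Chernyshthm}.
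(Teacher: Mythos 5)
Your proposal is correct and follows essentially the same approach as the paper: reduce to the Main Theorem by showing that a spin cobordism between simply-connected spin manifolds of dimension $\geq 5$ can be realised by a sequence of surgeries in codimension $\geq 3$, then iterate. The paper simply cites Theorem B of \cite{GL} (phrasing the handle-simplification in terms of killing the relative homology groups $H_1$, $H_2$, $H_{n-1}$, $H_n$ and invoking Whitney's embedding theorem plus the vanishing of $w_1, w_2$ for framed surgery spheres), whereas you re-derive the same statement via Smale-style handle trading; these are two standard ways of packaging the identical surgery-theoretic fact.
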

\begin{proof}
This follows from Theorem B of \cite{GL}, where the authors show that any two such manifolds which are spin cobordant can be mutually obtained from the other by codimension $\geq 3$ surgeries. The idea is to show that, given some manifold $W$, a spin cobordism of $X_0$ and $X_1$, the interior of $W$ can be modified by surgery to make $H_1(W,X_0)$, $H_2(W,X_0)$, $H_n(W,X_0)$ and $H_{n-1}(W,X_0)$ trivial. By a theorem of Whitney, Theorem 2 of \cite{Whitney}, generators of these relative homology groups can be represented by embedded spheres. The fact that $W$ is spin (i.e. its first and second Stiefel-Whitney classes vanish) means that these embedded spheres have trivial normal bundle and so surgery can be performed. Thus, $W$ will admit a Morse function $f:W\rightarrow I$, with $f^{-1}(0)=X_0$, $f^{-1}(1)=X_1$ and so that all critical points of the functions $f$ and $1-f$ correspond to surgeries in codimension at least three. 
\end{proof}

\vspace{0.5cm}

\noindent {\em E-mail address:} walsmark@math.oregonstate.edu

\end{document}